\newcommand{\NN}{\mathrm{I\!N\!}}
\def\subsection{\@startsection{subsection}{3}
  \z@{.5\linespacing\@plus.7\linespacing}{.1\linespacing}
  {\normalfont\itshape}}
\newtheorem{theorem}{Theorem}[section]
\newtheorem{corollary}[theorem]{Corollary}
\newtheorem{proposition}[theorem]{Proposition}
\theoremstyle{definition}
\newtheorem*{remark}{Remark}
\newtheorem*{remarks}{Remarks}
\numberwithin{equation}{section}
\mathchardef\hyphen="2D
\def\@tvsp{\mathchoice{{}\mkern-4.5mu}{{}\mkern-4.5mu}{{}\mkern-2.5mu}{}}
\def\ln{\left|\@tvsp\left|\@tvsp\left|}
\def\rn{\right|\@tvsp\right|\@tvsp\right|}
\begin{document}
\title{A note on ``Weak limits of almost invariant projections''}
\author{March T.~Boedihardjo}
\address{Department of Mathematics, Texas A\&M University, College Station, Texas 77843}
\email{march@math.tamu.edu}
\keywords{}
\subjclass[2010]{}
\begin{abstract}
We give alternative proofs to certain results in the paper \cite{Foias} by using ultraproducts of operators.
\end{abstract}
\maketitle
\allowdisplaybreaks
\section{Introduction}\label{1}
In \cite{Boedihardjo}, the author obtained some results concerning operators on Hilbert space inspired by ultraproducts and finite representability of Banach spaces. In
particular, the author obtained an alternative proof of Hadwin's characterization of the WOT, SOT and $*$-SOT closure of the unitary orbit of a given operator on
Hilbert space \cite[Theorem 4.4]{Hadwin1} and an affirmative answer to a question of Hadwin \cite[Question 9]{Hadwin2}. The key ingredients in the proofs of these are
ultraproducts of operators, the Calkin representation and \cite[Theorem 1.3]{Voiculescu}. The purpose of this paper is to extend the use of these ingredients to give
alternative proofs to certain results in \cite{Foias}.

Let $\mathcal{H}$ be a separable, infinite dimensional, complex Hilbert space. The algebra of bounded linear operators on $\mathcal{H}$ is denoted by $\mathcal{B(H)}$, and
the ideal of compact operators in $\mathcal{B(H)}$ is denoted by $\mathcal{K(H)}$. Let $p$ be the quotient map from $\mathcal{B(H)}$ onto $\mathcal{B(H)}/\mathcal{K(H)}$.

An algebra $\mathcal{A}\subset\mathcal{B(H)}$ is {\it reductive} if every subspace of $\mathcal{H}$ invariant under $\mathcal{A}$ reduces $\mathcal{A}$; $\mathcal{A}$ is
{\it strongly reductive} (see \cite{Harrison} and \cite{Apostol}) if for every sequence $(P_{n})_{n=1}^{\infty}$ of projections in $\mathcal{B(H)}$
satisfying
\[\lim_{n\to\infty}\|(I-P_{n})TP_{n}\|=0,\quad T\in\mathcal{A},\]
we have
\[\lim_{n\to\infty}\|TP_{n}-P_{n}T\|=0,\quad T\in\mathcal{A}.\]
Let $\psi_{1},\psi_{2}:\mathcal{A}\to\mathcal{B(H)}$ be two representations of an algebra $\mathcal{A}\subset\mathcal{B(H)}$. ``We say that $\psi_{2}$ is in the
norm-closed unitary orbit of $\psi_{1}$, if there exists a sequence $(U_{n})_{n=1}^{\infty}$ of unitary operators such that:
\[\lim_{n\to\infty}\|\psi_{2}(T)-U_{n}\psi_{1}(T)U_{n}^{-1}\|=0,\]
for all $T\in\mathcal{A}$."

The following characterization of strongly reductive separable commutative algebras was obtained in \cite{Foias}.
\begin{theorem}\label{12}
Let $\mathcal{A}\subset\mathcal{B(H)}$ be a norm-separable commutative algebra containing $I$. The following properties are equivalent:
\begin{enumerate}[(i)]
\item $\mathcal{A}$ is strongly reductive,
\item the norm-closure of $\mathcal{A}$ is a $C^{*}$-algebra,
\item for every representation $\rho$ of $\mathcal{A}$ in the norm-closed unitary orbit of the identity representation of $\mathcal{A}$ on $\mathcal{H}$, the algebra
$\rho(\mathcal{A})$ is reductive.
\end{enumerate}
\end{theorem}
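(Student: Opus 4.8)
The plan is to prove the cycle $(ii)\Rightarrow(i)\Rightarrow(iii)\Rightarrow(ii)$; the first two implications are soft, and all the weight — and the only place where ultraproducts, the Calkin representation and Voiculescu's theorem enter — is $(iii)\Rightarrow(ii)$. For $(ii)\Rightarrow(i)$: if $\overline{\mathcal{A}}$ is a $C^{*}$-algebra and $\|(I-P_{n})TP_{n}\|\to0$ for $T\in\mathcal{A}$, then the same holds for all $T\in\overline{\mathcal{A}}$ by density and $\|P_{n}\|\le1$; applying it to $T^{*}\in\overline{\mathcal{A}}$ and taking adjoints gives $\|P_{n}T(I-P_{n})\|\to0$, whence $TP_{n}-P_{n}T=(I-P_{n})TP_{n}-P_{n}T(I-P_{n})\to0$ (commutativity is not used). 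For $(i)\Rightarrow(iii)$: let $\rho$ be in the norm-closed unitary orbit of $\mathrm{id}_{\mathcal{A}}$, say $\|\rho(T)-U_{n}TU_{n}^{*}\|\to0$, and let $M$ be invariant under $\rho(\mathcal{A})$. With $P_{n}:=U_{n}^{*}P_{M}U_{n}$ one has $\|(I-P_{n})TP_{n}\|=\|(I-P_{M})(U_{n}TU_{n}^{*}-\rho(T))P_{M}\|\to0$, so $(P_{n})$ is almost invariant for $\mathcal{A}$; by $(i)$, $\|TP_{n}-P_{n}T\|\to0$, and since $\|TP_{n}-P_{n}T\|=\|(U_{n}TU_{n}^{*})P_{M}-P_{M}(U_{n}TU_{n}^{*})\|\to\|\rho(T)P_{M}-P_{M}\rho(T)\|$, the subspace $M$ reduces $\rho(\mathcal{A})$.

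For $(iii)\Rightarrow(ii)$ I would argue contrapositively: given $A_{0}\in\overline{\mathcal{A}}$ with $A_{0}^{*}\notin\overline{\mathcal{A}}$, I would manufacture $\rho$ in the norm-closed unitary orbit of $\mathrm{id}_{\mathcal{A}}$ with $\rho(\mathcal{A})$ not reductive. First, every such $\rho$ extends to a unital $*$-representation $\tilde\rho$ of $\mathcal{B}:=C^{*}(\mathcal{A})$ via $\tilde\rho(S):=\lim_{n}U_{n}SU_{n}^{*}$ (the limit exists in norm for every $S\in\mathcal{B}$, and is multiplicative and $*$-preserving), and conversely the restriction to $\mathcal{A}$ of such a $\tilde\rho$ lies in the orbit; so by Voiculescu's theorem in the form of \cite[Theorem 1.3]{Voiculescu} (for our purposes: two unital representations of a separable unital $C^{*}$-algebra are approximately unitarily equivalent iff they have the same kernel and the same rank on every positive element), it suffices to produce a faithful unital $*$-representation $\tilde\rho$ of $\mathcal{B}$ on a separable Hilbert space, with $\operatorname{rank}\tilde\rho(a)=\operatorname{rank}a$ for all $a\in\mathcal{B}_{+}$, such that $\tilde\rho(\mathcal{A})$ is not reductive. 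The candidate comes from an ultrapower: fix a free ultrafilter $\mathcal{U}$, let $\mathcal{H}$ sit diagonally in $\mathcal{H}^{\mathcal{U}}$ (a reducing subspace for every $S^{\mathcal{U}}$, $S\in\mathcal{B}$), and pick a separable subspace $\mathcal{H}_{0}\supseteq\mathcal{H}$ of $\mathcal{H}^{\mathcal{U}}$ reducing for $\mathcal{B}^{\mathcal{U}}:=\{S^{\mathcal{U}}:S\in\mathcal{B}\}$. Then $\tilde\rho:=\mathcal{B}^{\mathcal{U}}|_{\mathcal{H}_{0}}$ is faithful (already on the diagonal copy of $\mathcal{H}$), and $\operatorname{rank}\tilde\rho(a)=\operatorname{rank}a$ because $a^{\mathcal{U}}$ has the same finite-or-infinite rank as $a$ — for finite-rank $a=\sum_{i}\langle\cdot,\phi_{i}\rangle\psi_{i}$ one computes $a^{\mathcal{U}}=\sum_{i}\langle\cdot,\phi_{i}^{\mathcal{U}}\rangle\psi_{i}^{\mathcal{U}}$, whose range lies in the diagonal $\mathcal{H}$. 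Hence $\tilde\rho|_{\mathcal{A}}=\mathcal{A}^{\mathcal{U}}|_{\mathcal{H}_{0}}$ lies in the norm-closed unitary orbit of $\mathrm{id}_{\mathcal{A}}$, for every such $\mathcal{H}_{0}$. (The same computation shows that $T\mapsto T^{\mathcal{U}}|_{\mathcal{H}^{\mathcal{U}}\ominus\mathcal{H}}$ is a faithful unital $*$-representation of the Calkin algebra $\mathcal{B(H)}/\mathcal{K(H)}$.)

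It remains to choose $\mathcal{H}_{0}$ so that $\mathcal{A}^{\mathcal{U}}|_{\mathcal{H}_{0}}$ is not reductive; this reduces to finding a projection $Q$ on $\mathcal{H}^{\mathcal{U}}$ with $(I-Q)T^{\mathcal{U}}Q=0$ for all $T\in\overline{\mathcal{A}}$ but $(I-Q)(A_{0}^{*})^{\mathcal{U}}Q\ne0$ — after which one enlarges $\mathcal{H}_{0}$ to a separable subspace still reducing for $\mathcal{B}^{\mathcal{U}}$, reducing for $Q$, and containing $\mathcal{H}$ together with vectors witnessing $(I-Q)(A_{0}^{*})^{\mathcal{U}}Q\ne0$. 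Producing such a $Q$ from the bare fact that $\operatorname{dist}(A_{0}^{*},\overline{\mathcal{A}})>0$ is the main obstacle, and it is the place where commutativity of $\mathcal{A}$ and the extra room in the ultrapower (and the Calkin picture above) must be used in an essential way: the natural attempt is to take $Q$ the projection onto $\overline{[\,\overline{\mathcal{A}}^{\mathcal{U}}\xi\,]}$ for a well-chosen $\xi=(\xi_{n})_{\mathcal{U}}$, which forces one to arrange the scalars $\lim_{\mathcal{U}}\|(A_{0}^{*}-A)\xi_{n}\|$ to stay bounded away from $0$ uniformly in $A\in\overline{\mathcal{A}}$ while keeping $\lim_{\mathcal{U}}\|A\xi_{n}\|$ comparable to $\|A\|$. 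I expect this step to be the heart of the argument. Finally, I note that if $Q$ can in fact be taken in the image $(Q_{n})_{\mathcal{U}}$ of a genuine sequence of projections, then passing to a subsequence $(n_{k})$ lying in the sets $\{n:\|(I-Q_{n})TQ_{n}\|<1/k\}$ (over a dense set of $T\in\overline{\mathcal{A}}$) intersected with $\{n:\|(I-Q_{n})A_{0}^{*}Q_{n}\|>\tfrac12\lim_{\mathcal{U}}\|(I-Q_{m})A_{0}^{*}Q_{m}\|\}$ produces an almost-invariant sequence $(Q_{n_{k}})$ for $\mathcal{A}$ that does not almost commute with $A_{0}$, yielding $(i)\Rightarrow(ii)$ directly as well.
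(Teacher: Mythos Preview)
Your arguments for $(ii)\Rightarrow(i)$ and $(i)\Rightarrow(iii)$ are fine and match the standard ones. The problem is in $(iii)\Rightarrow(ii)$: you correctly isolate the crux --- producing a projection $Q$ on $\mathcal{H}^{\mathscr{U}}$ with $(I-Q)T^{\mathscr{U}}Q=0$ for all $T\in\overline{\mathcal{A}}$ but $(I-Q)(A_{0}^{*})^{\mathscr{U}}Q\neq 0$ --- and then explicitly leave it undone (``I expect this step to be the heart of the argument''). This is a genuine gap, not a routine omission: manufacturing such a $Q$ from the bare hypothesis $A_{0}^{*}\notin\overline{\mathcal{A}}$ is essentially the content of ``non-self-adjoint closure $\Rightarrow$ not strongly reductive (in some representation)'', which is exactly the hard direction you are trying to prove. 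Your proposed cyclic-vector construction $Q=P_{\overline{[\,\overline{\mathcal{A}}^{\mathscr{U}}\xi\,]}}$ gives invariance for free, but there is no mechanism offered for choosing $\xi$ so that $(A_{0}^{*})^{\mathscr{U}}\xi$ falls outside that closure; commutativity alone does not supply one.

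The paper sidesteps this difficulty by proving $(iii)\Rightarrow(i)$ rather than $(iii)\Rightarrow(ii)$. The key lemma (Proposition~\ref{41}) is that $(iii)$ forces the \emph{entire} ultrapower algebra $\{T^{\mathscr{U}}:T\in\mathcal{A}\}\subset\mathcal{B}(\mathcal{H}^{\mathscr{U}})$ to be reductive. The proof uses precisely your observation that $T\mapsto T^{\mathscr{U}}|_{\mathcal{H}\oplus\mathcal{M}}=\mathrm{id}\oplus (f\circ p)|_{\mathcal{M}}$ lies in the norm-closed unitary orbit of $\mathrm{id}$ for every separable reducing $\mathcal{M}\subset\widehat{\mathcal{H}}$ (via Corollary~\ref{23}), together with a routine separability reduction. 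Once this is known, $(i)$ drops out immediately: given any almost-invariant sequence $(P_{n})$ for $\mathcal{A}$, the ultraproduct $P:=(P_{1},P_{2},\ldots)_{\mathscr{U}}$ is a projection with $(I-P)T^{\mathscr{U}}P=0$, hence by reductivity $P$ commutes with every $T^{\mathscr{U}}$, i.e.\ $\lim_{n,\mathscr{U}}\|TP_{n}-P_{n}T\|=0$.

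In other words, rather than starting from a hypothetical $A_{0}^{*}\notin\overline{\mathcal{A}}$ and trying to \emph{construct} a bad $Q$ (which you could not complete), the paper starts from a hypothetical bad sequence $(P_{n})$ and lets $Q:=(P_{n})_{\mathscr{U}}$ be handed to it, then uses $(iii)$ to show this $Q$ must in fact commute. The remaining implication $(i)\Rightarrow(ii)$ is obtained from an external source (the original argument in \cite{Foias} in the commutative case, or \cite{Prunaru} in general), not reproved here.
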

Note that (ii)$\Rightarrow$(i) is obvious, and (i)$\Rightarrow$(iii) is simple and elementary but slightly technical (see \cite[page 92]{Foias}). The main part of Theorem
\ref{12} is (iii)$\Rightarrow$(ii). It was asked at the end of \cite{Foias} whether there is a simple, direct proof of (iii)$\Rightarrow$(i).

In Section 2, we recall the ultraproducts of operators, the Calkin representation and \cite[Theorem 1.3]{Voiculescu} which are needed in the rest of this paper. Section 2
is essentially the same as the beginning part of \cite[Section 3]{Boedihardjo}.

In Section 3, we give a direct proof of (iii)$\Rightarrow$(i) in Theorem \ref{12}. From this proof together with a result in \cite{Prunaru}, we obtain that Theorem
\ref{12} is true without the commutativity assumption on $\mathcal{A}$.

In Section 4, we give an alternative proof of an implication in the main result of \cite{Foias}, which asserts the equivalence of three statements about a separable closed
algebra $\mathcal{A}$ in $\mathcal{B(H)}$ containing $I$ and a positive contraction $Q\in\mathcal{B(H)}$. From this alternative proof, we obtain a slight improvement of
the main result of \cite{Foias}.
\section{Ultraproducts of operators}\label{2}
Let $\mathscr{U}$ be a free ultrafilter on $\NN\,$. If $(a_{n})_{n\geq 1}$ is a bounded sequence in $\mathbb{C}$, then its ultralimit through $\mathscr{U}$ is denoted by
$\displaystyle\lim_{n,\mathscr{U}}a_{n}$. Consider the Banach space
\[\mathcal{H}^{\mathscr{U}}:=l^{\infty}(\mathcal{H})/\left\{(x_{n})_{n\geq 1}\in l^{\infty}(\mathcal{H}):\lim_{n,\mathscr{U}}\|x_{n}\|=0\right\}.\]
If $(x_{n})_{n\geq 1}\in l^{\infty}(\mathcal{H})$ then its image in $\mathcal{H}^{\mathscr{U}}$ is denoted by $(x_{n})_{\mathscr{U}}$, and it can be easily checked that
\[\|(x_{n})_{\mathscr{U}}\|=\lim_{n,\mathscr{U}}\|x_{n}\|.\]
Moreover, $\mathcal{H}^{\mathscr{U}}$ is, in fact, a Hilbert space with inner product
\[\langle(x_{n})_{\mathscr{U}},(y_{n})_{\mathscr{U}}\rangle=\lim_{n,\mathscr{U}}\langle x_{n},y_{n}\rangle.\]
But $\mathcal{H}^{\mathscr{U}}$ is nonseparable.

If $(T_{n})_{n\geq 1}$ is a bounded sequence in $\mathcal{B(H)}$, then its {\it ultraproduct} $(T_{1},T_{2},\ldots)_{\mathscr{U}}\in\mathcal{B}(\mathcal{H}^{\mathscr{U}})$
is defined by $(x_{n})_{\mathscr{U}}\mapsto (T_{n}x_{n})_{\mathscr{U}}$. If $T\in\mathcal{B(H)}$ then its {\it ultrapower} $T^{\mathscr{U}}\in\mathcal{B}(\mathcal{H}^{
\mathscr{U}})$ is defined by $(x_{n})_{\mathscr{U}}\mapsto (Tx_{n})_{\mathscr{U}}$. It is easy to see that
\[\|(T_{1},T_{2},\ldots)_{\mathscr{U}}\|=\lim_{n,\mathscr{U}}\|T_{n}\|,\]
\[(T_{1},T_{2},\ldots)_{\mathscr{U}}^{*}=(T_{1}^{*},T_{2}^{*},\ldots)_{\mathscr{U}},\]
and in particular, $(T^{\mathscr{U}})^{*}=(T^{*})^{\mathscr{U}}$.

Consider the subspace
\[\widehat{\mathcal{H}}:=\left\{(x_{n})_{\mathscr{U}}\in \mathcal{H}^{\mathscr{U}}:w\hyphen\lim_{n,\mathscr{U}}x_{n}=0\right\}.\]
Here $\displaystyle w\hyphen\lim_{n,\mathscr{U}}x_{n}$ is the weak limit of $(x_{n})_{n\geq 1}$ through $\mathscr{U}$, i.e., the unique element $x\in\mathcal{H}$ such that
\begin{equation}\label{21e}
\langle x,y\rangle=\lim_{n,\mathscr{U}}\langle x_{n},y\rangle,\quad y\in\mathcal{H}.
\end{equation}
Consider also the (closed) subspace $\{(x)_{\mathscr{U}}:x\in\mathcal{H}\}$ of $\mathcal{H}^{\mathscr{U}}$. The projection from $\mathcal{H}^{\mathscr{U}}$ onto this
subspace is given by $\displaystyle(x_{n})_{\mathscr{U}}\mapsto(w\hyphen\lim_{k,\mathscr{U}}x_{k})_{\mathscr{U}}$, and so $\{(x)_{\mathscr{U}}:x\in\mathcal{H}\}^{\perp}
=\widehat{\mathcal{H}}$. We shall identify $\{(x)_{\mathscr{U}}:x\in\mathcal{H}\}$ with $\mathcal{H}$. So we have $\mathcal{H}^{\mathscr{U}}=\mathcal{H}\oplus
\widehat{\mathcal{H}}$.

For $T\in\mathcal{B(H)}$, $\mathcal{\widehat{H}}$ is a reducing subspace for $T^{\mathscr{U}}$ and define $\widehat{T}\in\mathcal{B}(\mathcal{\widehat{H}})$ by
\[\widehat{T}:=T^{\mathscr{U}}|_{\mathcal{\widehat{H}}}.\]
Thus, we have
\begin{equation}\label{22e}
T^{\mathscr{U}}=T\oplus\widehat{T}
\end{equation}
with respect to the decomposition $\mathcal{H}^{\mathscr{U}}=\mathcal{H}\oplus\widehat{\mathcal{H}}$.

Note that $\widehat{K}=0$ for $K\in\mathcal{K(H)}$. (The proof of this  uses the topological definition of weak ultralimit rather than (\ref{21e}) above and uses also the
fact that every sequence in a compact metric space converges to an element through $\mathscr{U}$. This compact Hausdorff space is taken to be the norm closure of the image
of the unit ball of $\mathcal{H}$ under $K$ equipped with the norm topology.)
The map $f:\mathcal{B(H)}/\mathcal{K(H)}\to\mathcal{B}(\widehat{\mathcal{H}})$ defined by $p(T)\mapsto\widehat{T}$ is the {\it Calkin representation}.
\begin{theorem}[\cite{Calkin}, Theorem 5.5]\label{21}
The map $f$ is an isometric $*$-isomorphism into $\mathcal{B}(\mathcal{\widehat{H}})$.
\end{theorem}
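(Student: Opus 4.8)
The plan is to verify three things in succession: that $f$ is a well-defined unital $*$-homomorphism, that $\|\widehat{T}\|\le\|p(T)\|$, and that $\|\widehat{T}\|\ge\|p(T)\|$; once both norm inequalities are in hand, $f$ is isometric, hence injective, hence a $*$-isomorphism onto its range. For the first point I would argue that $T\mapsto T^{\mathscr{U}}$ is a unital $*$-homomorphism of $\mathcal{B(H)}$ into $\mathcal{B}(\mathcal{H}^{\mathscr{U}})$: linearity is clear, multiplicativity is the coordinatewise identity $(ST)x_{n}=S(Tx_{n})$, the adjoint formula was recorded above, and $I^{\mathscr{U}}$ is the identity of $\mathcal{H}^{\mathscr{U}}$. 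Restricting to the common reducing subspace $\widehat{\mathcal{H}}$ via \eqref{22e} makes $T\mapsto\widehat{T}$ a unital $*$-homomorphism of $\mathcal{B(H)}$ into $\mathcal{B}(\widehat{\mathcal{H}})$, and since $\widehat{K}=0$ for $K\in\mathcal{K(H)}$ it annihilates $\mathcal{K(H)}$ and therefore factors through $p$; the induced map is exactly $f$.

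The upper bound is immediate: for every $K\in\mathcal{K(H)}$ one has $\widehat{T}=\widehat{T+K}$, so $\|\widehat{T}\|\le\|(T+K)^{\mathscr{U}}\|=\|T+K\|$, and taking the infimum over $K$ gives $\|\widehat{T}\|\le\|p(T)\|$. The lower bound is the only part with genuine content. Put $c=\|p(T)\|=\operatorname{dist}(T,\mathcal{K(H)})$, fix an orthonormal basis $(e_{j})_{j\ge 1}$ of $\mathcal{H}$, and set $F_{n}=\operatorname{span}\{e_{1},\dots,e_{n}\}$. The key claim is that for each $n$ there is a unit vector $x_{n}$ orthogonal to $F_{n}$ with $\|Tx_{n}\|\ge c-1/n$: otherwise $\|Tx\|<c-1/n$ for every unit $x\perp F_{n}$, so $\|T(I-P_{F_{n}})\|\le c-1/n$, and as $TP_{F_{n}}$ has finite rank this contradicts $\operatorname{dist}(T,\mathcal{K(H)})=c$. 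The resulting sequence $(x_{n})$ is bounded and satisfies $\langle x_{n},e_{j}\rangle=0$ whenever $n\ge j$, hence $x_{n}\to 0$ weakly, so $w\hyphen\lim_{n,\mathscr{U}}x_{n}=0$ and $\xi:=(x_{n})_{\mathscr{U}}$ is a unit vector of $\widehat{\mathcal{H}}$. Since $\widehat{\mathcal{H}}$ reduces $T^{\mathscr{U}}$, this yields $\|\widehat{T}\|\ge\|T^{\mathscr{U}}\xi\|=\lim_{n,\mathscr{U}}\|Tx_{n}\|\ge c$.

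Combining the two estimates shows $f$ is isometric, hence injective, hence a $*$-isomorphism of $\mathcal{B(H)}/\mathcal{K(H)}$ onto a closed subalgebra of $\mathcal{B}(\widehat{\mathcal{H}})$. The main obstacle is the lower bound, and within it the construction of the weakly null almost-norming sequence $(x_{n})$; but this rests only on the elementary observation that removing a finite-rank corner from $T$ cannot decrease its distance to $\mathcal{K(H)}$. (Alternatively, one could invoke the general fact that an injective $*$-homomorphism between $C^{*}$-algebras is automatically isometric, which reduces the whole statement to the injectivity of $f$ — that is, to the $c>0$ instance of the claim used in the lower bound.)
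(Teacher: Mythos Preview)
Your argument is correct. The three steps---well-definedness via the vanishing of $\widehat{K}$ for $K\in\mathcal{K(H)}$, the upper bound $\|\widehat{T}\|\le\|p(T)\|$ by minimizing over compact perturbations, and the lower bound via a weakly null sequence of approximate norming vectors---are all sound. The only point worth a remark is that the contradiction in your lower bound step implicitly uses $c-1/n>0$; for indices $n$ with $c-1/n\le 0$ the claim is vacuous (any unit vector orthogonal to $F_{n}$ works), so no harm is done, but you might say a word to that effect.

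As for comparison with the paper: there is nothing to compare. The paper does not prove Theorem~\ref{21}; it merely quotes it from Calkin's original article and uses it as a black box. Your write-up therefore supplies a proof where the paper offers only a citation. Your approach is in fact the standard modern one, and the parenthetical alternative you mention---reducing everything to injectivity and then invoking the automatic isometry of injective $*$-homomorphisms between $C^{*}$-algebras---is equally valid and arguably cleaner, since it isolates the single nontrivial ingredient (that $\widehat{T}\neq 0$ whenever $T$ is not compact).
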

Let us recall the definition of approximate unitary equivalence of representations and a result of Voiculescu.

Let $\psi_{1},\psi_{2}:\mathcal{A}\to\mathcal{B(H)}$ be two representations of an algebra $\mathcal{A}\subset\mathcal{B(H)}$. Then $\psi_{1}$ and $\psi_{2}$ are
{\it approximately unitarily equivalent} \cite{Voiculescu}, denoted by $\psi_{1}\sim_{a}\psi_{2}$, if there is a sequence $(U_{n})_{n=1}^{\infty}$ of unitary operators
such that
\[\psi_{2}(T)-U_{n}\psi_{1}(T)U_{n}^{-1}\in\mathcal{K(H)},\quad n\geq 1,\]
and
\[\lim_{n\to\infty}\|\psi_{2}(T)-U_{n}\psi_{1}(T)U_{n}^{-1}\|=0\]
for all $T\in\mathcal{A}$. Note that if $\psi_{1}\sim_{a}\psi_{2}$ then $\psi_{2}$ is in the norm-closed unitary orbit of $\psi_{1}$.
\begin{theorem}[\cite{Voiculescu}, Theorem 1.3]\label{22}
Let $\mathcal{A}$ be a separable $C^{*}$-algebra with unit and $\rho$ a representation of $\mathcal{A}$ on $\mathcal{H}$. Let $\pi$ be a representation of $p(\rho(
\mathcal{A}))$ on a separable Hilbert space $\mathcal{H}_{\pi}$. Then $\rho\sim_{a}\rho\oplus\pi\circ p\circ\rho$.
\end{theorem}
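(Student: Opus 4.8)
The plan is to run the classical argument (Voiculescu's noncommutative Weyl--von Neumann theorem); I only outline the steps, not the computations. Write $\sigma:=\pi\circ p\circ\rho$. The first observation is that $\sigma$ annihilates the ideal $\mathcal{J}:=\{a\in\mathcal{A}:\rho(a)\in\mathcal{K(H)}\}$, since $p\circ\rho$ does; equivalently $\sigma$ factors through the $*$-homomorphism $p\circ\rho:\mathcal{A}\to\mathcal{B(H)}/\mathcal{K(H)}$. Thus we are in the ``singular'' case, and the assertion is the absorption statement $\rho\oplus\sigma\sim_{a}\rho$.

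\emph{Reduction to a single building block.} I would first decompose the separable space $\mathcal{H}_{\pi}$ into an at most countable orthogonal sum of cyclic subrepresentations of $\pi$, so that $\sigma=\bigoplus_{k}\tau_{k}$ with each $\tau_{k}$ cyclic and still annihilating $\mathcal{J}$. Since any sub-ideal of $\mathcal{J}$ is again killed by each $\tau_{k}$, a standard telescoping/absorption argument (apply the one-step result with $\rho$ replaced by $\rho\oplus\tau_{1}\oplus\cdots\oplus\tau_{n}$, let $n\to\infty$, and use symmetry of $\sim_{a}$) reduces the theorem to the following: if $\tau$ is a cyclic representation of $\mathcal{A}$ with $\tau(\mathcal{J})=0$, then $\rho\oplus\tau\sim_{a}\rho$.

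\emph{The building block.} Let $\xi_{0}$ be a unit cyclic vector for $\tau$ and $g(a)=\langle\tau(a)\xi_{0},\xi_{0}\rangle$ the associated state, which vanishes on $\mathcal{J}$; hence $g$ extends (Hahn--Banach) to a state on $\mathcal{B(H)}$ vanishing on $\mathcal{K(H)}$, and a singular state on $\mathcal{B(H)}$ is a weak-$*$ limit of vector states at unit vectors tending weakly to $0$, so $g(a)=\lim_{j}\langle\rho(a)\eta_{j},\eta_{j}\rangle$ for some unit vectors $\eta_{j}\to0$ weakly. The engine for converting this into an approximate intertwiner is a quasicentral approximate unit $(e_{m})_{m\geq 1}$ for $\mathcal{K(H)}$ relative to $C^{*}(\rho(\mathcal{A}))$, i.e.\ an increasing approximate unit of $\mathcal{K(H)}$ with $\|e_{m}\rho(a)-\rho(a)e_{m}\|\to0$ for every $a$; its existence is a convexity argument (Hahn--Banach in $\mathcal{B(H)}/\mathcal{K(H)}$). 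Cutting the $\eta_{j}$ down by blocks $(e_{m+1}-e_{m})^{1/2}$, one produces, for each finite-dimensional $F\subseteq\mathcal{H}_{\tau}$, finite $\Omega\subseteq\mathcal{A}$ and $\varepsilon>0$, an isometry $V$ of $F$ into $\mathcal{H}$ which on $F$ almost intertwines $\rho$ with $\tau$ to within $\varepsilon$ on $\Omega$ and whose range is essentially orthogonal to any prescribed finitely many vectors (this is exactly where $\eta_{j}\to0$ weakly and the quasicentrality of $(e_{m})$ enter). Running a back-and-forth construction over an exhaustion of $\mathcal{H}_{\tau}$ by finite-dimensional subspaces, a countable dense subset of $\mathcal{A}$ and a summable tolerance sequence --- at each stage also filling in finite-dimensional pieces of $\mathcal{H}$ by the same device so they do not collide with the $\mathcal{H}_{\tau}$-pieces --- one amalgamates these partial maps into unitaries $U_{n}:\mathcal{H}\to\mathcal{H}\oplus\mathcal{H}_{\tau}$ with $U_{n}\rho(a)U_{n}^{*}-\bigl(\rho(a)\oplus\tau(a)\bigr)$ compact and tending to $0$ in norm. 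That is $\rho\sim_{a}\rho\oplus\tau$, and undoing the reduction gives $\rho\sim_{a}\rho\oplus\sigma$.

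\emph{Main obstacle.} The two conceptual inputs --- representing $g$ by a weakly null sequence of vector states, and the quasicentral approximate unit --- are each short once set up. The technical heart is the final construction: organising the back-and-forth so that, simultaneously, the intertwining defect is genuinely compact, the ``leftover'' copy of $\rho$ produced on the complement of the $\mathcal{H}_{\tau}$-part stays unitarily close to $\rho$ itself, and all error terms remain summable both along the exhaustion of $\mathcal{H}_{\tau}$ and along the (a priori infinitely many) cyclic summands $\tau_{k}$. Getting this bookkeeping right is the real work; everything else is soft.
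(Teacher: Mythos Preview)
The paper does not give a proof of this statement at all: Theorem~\ref{22} is simply quoted from \cite{Voiculescu} and used as a black box (it is one of the ``key ingredients'' listed in Section~\ref{2}). So there is nothing in the paper to compare your proposal against.

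What you have written is a reasonable sketch of the classical Voiculescu argument itself --- Glimm's lemma to realise the singular state by weakly null vectors, a quasicentral approximate unit for $\mathcal{K(H)}$ relative to $C^{*}(\rho(\mathcal{A}))$, and the inductive construction of the intertwining unitaries. One small point of care: the Hahn--Banach step should really be phrased at the level of the Calkin algebra (extend the state on $p(\rho(\mathcal{A}))$ to $\mathcal{B(H)}/\mathcal{K(H)}$ and pull back via $p$), since a naive extension of $g$ from $\rho(\mathcal{A})$ to $\mathcal{B(H)}$ need not vanish on $\mathcal{K(H)}$. With that adjustment the outline is correct, and you have accurately identified the bookkeeping in the back-and-forth as the genuine technical content.
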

Suppose now that $\mathcal{A}\subset\mathcal{B(H)}$. Take $\rho$ to be the identity representation $\mathrm{id}$ of $\mathcal{A}$ on $\mathcal{H}$. If $\mathcal{M}$ is a
separable subspace of $\widehat{\mathcal{H}}$ that reduces $(f\circ p)(\mathcal{A})$, then $f|_{\mathcal{M}}$ defines a representation of $p(\mathcal{A})$ on
$\mathcal{M}$. Taking $\pi$ to be this representation in Theorem \ref{22} and $\mathcal{H}_{\pi}=\mathcal{M}$, we obtain
\begin{corollary}\label{23}
Let $\mathcal{A}$ be a separable $C^{*}$-subalgebra of $\mathcal{B(H)}$ containing $I$. Let $\mathcal{M}$ be a separable subspace of $\widehat{\mathcal{H}}$ that reduces
$(f\circ p)(\mathcal{A})$. Then $\mathrm{id}\sim_{a}\mathrm{id}\oplus[(f\circ p\circ\mathrm{id})|_{\mathcal{M}}]$.
\end{corollary}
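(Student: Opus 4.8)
The plan is to read this off directly from Voiculescu's theorem (Theorem \ref{22}) with $\rho$ taken to be the identity representation $\mathrm{id}$ of $\mathcal{A}$ on $\mathcal{H}$. Since $\mathcal{A}$ is a separable unital $C^{*}$-subalgebra of $\mathcal{B(H)}$, it satisfies the hypotheses imposed on the domain of $\rho$ in Theorem \ref{22}, and $p(\mathrm{id}(\mathcal{A})) = p(\mathcal{A})$. So the only thing to produce is a representation $\pi$ of $p(\mathcal{A})$ on a separable Hilbert space for which $\pi\circ p\circ\mathrm{id}$ coincides with $(f\circ p\circ\mathrm{id})|_{\mathcal{M}}$.

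First I would put $\mathcal{H}_{\pi} = \mathcal{M}$, which is separable by hypothesis, and define $\pi:p(\mathcal{A})\to\mathcal{B}(\mathcal{M})$ by $\pi(p(T)) = f(p(T))|_{\mathcal{M}} = \widehat{T}|_{\mathcal{M}}$ for $T\in\mathcal{A}$. This is manifestly well defined, being literally the Calkin representation $f$ followed by restriction to $\mathcal{M}$. Then I would check that $\pi$ is a unital $*$-homomorphism: by Theorem \ref{21}, $f$ is an isometric unital $*$-isomorphism of $\mathcal{B(H)}/\mathcal{K(H)}$ onto a $C^{*}$-subalgebra of $\mathcal{B}(\widehat{\mathcal{H}})$, so $f\circ p$ is a unital $*$-homomorphism of $\mathcal{B(H)}$ carrying $\mathcal{A}$ onto $(f\circ p)(\mathcal{A})$; and since $\mathcal{M}$ reduces $(f\circ p)(\mathcal{A})$, the compression map $S\mapsto S|_{\mathcal{M}}$ restricts to a unital $*$-homomorphism on $(f\circ p)(\mathcal{A})$ (sums, products, adjoints and the identity are all preserved by restriction to a common reducing subspace). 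Composing these gives the desired representation $\pi$ of $p(\mathcal{A})$ on $\mathcal{M}$.

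Next I would apply Theorem \ref{22} with these choices of $\rho$, $\pi$ and $\mathcal{H}_{\pi}$, obtaining $\mathrm{id}\sim_{a}\mathrm{id}\oplus(\pi\circ p\circ\mathrm{id})$. Finally I would identify the second summand: for each $T\in\mathcal{A}$,
\[ (\pi\circ p\circ\mathrm{id})(T) = \pi(p(T)) = f(p(T))|_{\mathcal{M}} = \big((f\circ p\circ\mathrm{id})(T)\big)\big|_{\mathcal{M}}, \]
so $\pi\circ p\circ\mathrm{id} = (f\circ p\circ\mathrm{id})|_{\mathcal{M}}$, which is exactly the asserted approximate unitary equivalence.

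There is no real obstacle here: the statement is a specialization of Theorem \ref{22}, and the discussion preceding the corollary already carries out the argument. The only point deserving a moment's care is to confirm that restriction to the reducing subspace $\mathcal{M}$ turns $(f\circ p)(\mathcal{A})$ into a genuine representation of $p(\mathcal{A})$ on a \emph{separable} space, so that Voiculescu's theorem applies verbatim; both the $*$-homomorphism property and the separability are immediate from Theorem \ref{21} and the hypotheses on $\mathcal{M}$.
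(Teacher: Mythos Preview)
Your proposal is correct and follows exactly the approach the paper takes: the paragraph immediately preceding the corollary already sets $\rho=\mathrm{id}$, $\mathcal{H}_{\pi}=\mathcal{M}$, and $\pi=f|_{\mathcal{M}}$, then invokes Theorem~\ref{22}. Your write-up merely spells out in more detail why restriction to the reducing subspace $\mathcal{M}$ yields a genuine $*$-representation of $p(\mathcal{A})$ on a separable space, which is precisely the one point the paper leaves implicit.
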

\section{Proof of (iii)$\Rightarrow$(i) in Theorem \ref{12}}\label{4}
\begin{proposition}\label{41}
If (iii) in Theorem \ref{12} holds then the algebra $\{T^{\mathscr{U}}:T\in\mathcal{A}\}$ in $\mathcal{B}(\mathcal{H}^{\mathscr{U}})$ is reductive.
\end{proposition}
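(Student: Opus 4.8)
The plan is to prove that every subspace of $\mathcal{H}^{\mathscr{U}}$ invariant under $\mathcal{A}^{\mathscr{U}}:=\{T^{\mathscr{U}}:T\in\mathcal{A}\}$ is in fact reducing. Reductivity of $\mathcal{A}^{\mathscr{U}}$ amounts to the statement that $(T^{*})^{\mathscr{U}}\xi\in\mathcal{N}$ whenever $\mathcal{N}$ is an $\mathcal{A}^{\mathscr{U}}$-invariant subspace, $\xi\in\mathcal{N}$, and $T\in\mathcal{A}$ (recall $(T^{\mathscr{U}})^{*}=(T^{*})^{\mathscr{U}}$), so I fix such $\mathcal{N}$, $\xi$, and $T_{0}\in\mathcal{A}$ and aim to show $(T_{0}^{*})^{\mathscr{U}}\xi\in\mathcal{N}$. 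The idea is to replace $\mathcal{H}^{\mathscr{U}}$ by a carefully chosen separable reducing subspace on which $\mathcal{A}^{\mathscr{U}}$ acts, via Corollary~\ref{23}, like a representation in the norm-closed unitary orbit of the identity representation of $\mathcal{A}$, and then invoke hypothesis~(iii).

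First I would set $\mathcal{C}:=C^{*}(\mathcal{A})\subset\mathcal{B(H)}$, a separable unital $C^{*}$-algebra, so that $\mathcal{C}^{\mathscr{U}}:=\{X^{\mathscr{U}}:X\in\mathcal{C}\}$ is a separable $C^{*}$-subalgebra of $\mathcal{B}(\mathcal{H}^{\mathscr{U}})$ containing $\mathcal{A}^{\mathscr{U}}$ and its adjoints. Let $\widetilde{\mathcal{P}}:=\mathcal{H}\vee\overline{\mathcal{C}^{\mathscr{U}}\xi}$, where $\mathcal{H}$ is identified with the constant sequences. Then $\widetilde{\mathcal{P}}$ is separable, it contains $\mathcal{H}$ and $\xi$, and it reduces $\mathcal{C}^{\mathscr{U}}$, being the join of the reducing subspace $\mathcal{H}$ (which reduces by \eqref{22e}) with the cyclic subspace $\overline{\mathcal{C}^{\mathscr{U}}\xi}$ of the $*$-algebra $\mathcal{C}^{\mathscr{U}}$. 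Since $\mathcal{H}\subset\widetilde{\mathcal{P}}$ is reducing for $\mathcal{C}^{\mathscr{U}}$, we get $\widetilde{\mathcal{P}}=\mathcal{H}\oplus\mathcal{M}$ with $\mathcal{M}:=\widetilde{\mathcal{P}}\cap\widehat{\mathcal{H}}$ a separable subspace of $\widehat{\mathcal{H}}$ reducing $(f\circ p)(\mathcal{C})=\mathcal{C}^{\mathscr{U}}|_{\widehat{\mathcal{H}}}$. With respect to this decomposition, \eqref{22e} gives $X^{\mathscr{U}}|_{\widetilde{\mathcal{P}}}=X\oplus\bigl((f\circ p)(X)|_{\mathcal{M}}\bigr)$ for all $X\in\mathcal{C}$; that is, $X\mapsto X^{\mathscr{U}}|_{\widetilde{\mathcal{P}}}$ is precisely the representation $\mathrm{id}\oplus[(f\circ p\circ\mathrm{id})|_{\mathcal{M}}]$ of $\mathcal{C}$ occurring in Corollary~\ref{23}. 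Hence, by Corollary~\ref{23} applied to $\mathcal{C}$, the representation $X\mapsto X^{\mathscr{U}}|_{\widetilde{\mathcal{P}}}$ is $\sim_{a}$ to the identity representation of $\mathcal{C}$ on $\mathcal{H}$ (note $\dim\widetilde{\mathcal{P}}=\dim\mathcal{H}$ because $\mathcal{H}\subset\widetilde{\mathcal{P}}$). Restricting the implementing unitaries to $\mathcal{A}$, the representation $\rho(T):=T^{\mathscr{U}}|_{\widetilde{\mathcal{P}}}$ of $\mathcal{A}$ on $\widetilde{\mathcal{P}}$ lies in the norm-closed unitary orbit of the identity representation of $\mathcal{A}$ on $\mathcal{H}$.

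Now hypothesis~(iii) applies to $\rho$, so $\rho(\mathcal{A})=\{T^{\mathscr{U}}|_{\widetilde{\mathcal{P}}}:T\in\mathcal{A}\}$ is reductive on $\widetilde{\mathcal{P}}$. Since both $\mathcal{N}$ and $\widetilde{\mathcal{P}}$ are $\mathcal{A}^{\mathscr{U}}$-invariant, $\mathcal{N}\cap\widetilde{\mathcal{P}}$ is $\rho(\mathcal{A})$-invariant, hence reduces $\rho(\mathcal{A})$; and because $\widetilde{\mathcal{P}}$ reduces $(T^{*})^{\mathscr{U}}=(T^{\mathscr{U}})^{*}$, this gives $(T^{*})^{\mathscr{U}}(\mathcal{N}\cap\widetilde{\mathcal{P}})\subset\mathcal{N}\cap\widetilde{\mathcal{P}}$ for every $T\in\mathcal{A}$. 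Since $\xi\in\mathcal{N}\cap\widetilde{\mathcal{P}}$, this yields $(T_{0}^{*})^{\mathscr{U}}\xi\in\mathcal{N}\cap\widetilde{\mathcal{P}}\subset\mathcal{N}$, completing the proof.

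I expect the delicate point to be the choice of $\widetilde{\mathcal{P}}$. Corollary~\ref{23} can only absorb a representation that vanishes on the compacts, so one cannot take $\widetilde{\mathcal{P}}=\overline{\mathcal{C}^{\mathscr{U}}\xi}$ when $\mathcal{A}$ meets $\mathcal{K(H)}$: forcing $\mathcal{H}$ to be a reducing summand of $\widetilde{\mathcal{P}}$ is exactly what confines the remaining ``new'' part $\mathcal{M}$ of the representation to $\widehat{\mathcal{H}}$, where every compact operator acts as $0$ (since $\widehat{K}=0$ for $K\in\mathcal{K(H)}$), so that Corollary~\ref{23} becomes applicable verbatim. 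The remaining verifications---that $\widetilde{\mathcal{P}}$ and $\mathcal{M}$ are separable and reduce $\mathcal{C}^{\mathscr{U}}$, and that passing to a reducing subspace commutes with taking adjoints---are routine.
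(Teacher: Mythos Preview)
Your proof is correct and follows essentially the same approach as the paper's: both pick a separable reducing subspace of $\widehat{\mathcal{H}}$ for $(f\circ p)(C^{*}(\mathcal{A}))$, invoke Corollary~\ref{23} to place the restricted ultrapower representation in the norm-closed unitary orbit of $\mathrm{id}$, apply hypothesis~(iii) to get reductivity on $\mathcal{H}\oplus\mathcal{M}$, and then pass back to the full $\mathcal{H}^{\mathscr{U}}$. Your version is slightly more streamlined in that you go directly to a single vector $\xi\in\mathcal{N}$ and work with $\mathcal{N}\cap\widetilde{\mathcal{P}}$, whereas the paper first handles the separable invariant case and then reduces the nonseparable case to it via the cyclic subspace $\overline{\{T^{\mathscr{U}}z:T\in\mathcal{A}\}}$; the content is the same.
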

\begin{proof}
By Corollary \ref{23}, for every separable reducing subspace $\mathcal{M}$ of $\widehat{H}$ that reduces $(f\circ p)(\mathcal{A})$, we have $\mathrm{id}\sim_{a}\mathrm{id}
\oplus[(f\circ p\circ\mathrm{id})|_{\mathcal{M}}]$, and so by assumption,
\[(\mathrm{id}\oplus[(f\circ p\circ\mathrm{id})|_{\mathcal{M}}])(\mathcal{A})=\{T\oplus [f(p(T))|_{\mathcal{M}}]:T\in\mathcal{A}\}\]
is reductive. But $T^{\mathscr{U}}|_{\mathcal{H}\oplus\mathcal{M}}=T\oplus(\widehat{T}|_{\mathcal{M}})=T\oplus [f(p(T))|_{\mathcal{M}}]$. Therefore,
$\{T^{\mathscr{U}}|_{\mathcal{H}\oplus\mathcal{M}}:T\in\mathcal{A}\}$ is reductive.

For every separable subspace $\mathcal{N}$ of $\mathcal{H}^{\mathscr{U}}$, there is a separable reducing subspace $\mathcal{M}$ for $(f\circ p)(\mathcal{A})$ such that
$\mathcal{N}\subset\mathcal{H}\oplus\mathcal{M}$. (Take, for example, $\mathcal{M}$ to be the smallest subspace of $\widehat{\mathcal{H}}$ that contains $P_{\widehat{H}}
\mathcal{N}$ and reduces $(f\circ p)(\mathcal{A})$.) Thus, if $\mathcal{N}$ is invariant under $\{T^{\mathscr{U}}:T\in\mathcal{A}\}$ then $\mathcal{N}$ is invariant under
$\{T^{\mathscr{U}}|_{\mathcal{H}\oplus\mathcal{M}}:T\in\mathcal{A}\}$. Since $\{T^{\mathscr{U}}|_{\mathcal{H}\oplus\mathcal{M}}:T\in\mathcal{A}\}$ is reductive, this
implies that $\mathcal{N}$ reduces $\{T^{\mathscr{U}}|_{\mathcal{H}\oplus\mathcal{M}}:T\in\mathcal{A}\}$ and thus reduces $\{T^{\mathscr{U}}:T\in\mathcal{A}\}$. Therefore,
every separable subspace of $\mathcal{H}^{\mathscr{U}}$ that is invariant under $\{T^{\mathscr{U}}:T\in\mathcal{A}\}$ reduces $\{T^{\mathscr{U}}:T\in\mathcal{A}\}$.

Suppose now that $\mathcal{N}$ is a subspace of $\mathcal{H}^{\mathscr{U}}$ invariant under $\{T^{\mathscr{U}}:T\in\mathcal{A}\}$ but $\mathcal{N}$ is not necessarily
separable. Let $z\in\mathcal{N}$. Then $\{T^{\mathscr{U}}z:T\in\mathcal{A}\}$ is a separable subspace of $\mathcal{H}^{\mathscr{U}}$ that is invariant under
$\{T^{\mathscr{U}}:T\in\mathcal{A}\}$. So by the conclusion of the previous paragraph, $\{T^{\mathscr{U}}z:T\in\mathcal{A}\}$ reduces $\{T^{\mathscr{U}}:T\in\mathcal{A}
\}$. Thus, $(T^{\mathscr{U}})^{*}z\in\{T^{\mathscr{U}}z:T\in\mathcal{A}\}$ for all $T\in\mathcal{A}$. Since $\mathcal{N}$ is invariant under $\{T^{\mathscr{U}}:T\in
\mathcal{A}\}$, this implies that $(T^{\mathscr{U}})^{*}z\in\mathcal{N}$ for all $T\in\mathcal{A}$ and $z\in\mathcal{N}$. Therefore, $\mathcal{N}$ reduces $\{T^{\mathscr{U
}}:T\in\mathcal{A}\}$. It follows that $\{T^{\mathscr{U}}:T\in\mathcal{A}\}$ is reductive.
\end{proof}
We are now ready to complete the proof of (iii)$\Rightarrow$(i) in Theorem \ref{12}. Suppose that (iii) is true and (i) is not true. Then there exist $\epsilon>0$, $T_{0}
\in\mathcal{A}$ and a sequence $(P_{n})_{n\geq}$ of projections in $\mathcal{B(H)}$ such that $\displaystyle\lim_{n\to\infty}\|(I-P_{n})TP_{n}\|=0$ for all $T\in
\mathcal{A}$ but $\|T_{0}P_{n}-P_{n}T_{0}\|\geq\epsilon$ for all $n\geq 1$.

Note that $(P_{1},P_{2},\ldots)_{\mathscr{U}}$ is a projection in $\mathcal{B}(\mathcal{H}^{\mathscr{U}})$ and
\[(I-(P_{1},P_{2},\ldots)_{\mathscr{U}})T^{\mathscr{U}}(P_{1},P_{2},\ldots)_{\mathscr{U}}=((I-P_{1})TP_{1},(I-P_{2})TP_{2},\ldots)_{\mathscr{U}}=0\]
for all $T\in\mathcal{A}$. So by Proposition \ref{41}, $T^{\mathscr{U}}(P_{1},P_{2},\ldots)_{\mathscr{U}}=(P_{1},P_{2},\ldots)_{\mathscr{U}}T^{\mathscr{U}}$ for all $T\in
\mathcal{A}$. This means that
\[\lim_{n,\mathscr{U}}\|TP_{n}-P_{n}T\|=0,\quad T\in\mathcal{A}.\]
But $\|T_{0}P_{n}-P_{n}T_{0}\|\geq\epsilon$ for all $n\geq 1$ which is a contradiction. Therefore, (iii)$\Rightarrow$(i).
\begin{remarks}
The proof given in this section does not use the commutativity of $\mathcal{A}$. So (iii)$\Rightarrow$(i) in Theorem \ref{12} holds without the commutativity of
$\mathcal{A}$. Moreover, it was shown in \cite{Foias} that the converse direction (i)$\Rightarrow$(iii) also does not require the commutativity of $\mathcal{A}$ (nor the
separability).

In \cite{Prunaru}, it was proved that the norm closure of a strongly reductive algebra is self-adjoint which means that (i)$\Rightarrow$(ii) is true without the
commutativity and the separability of $\mathcal{A}$. Since (ii)$\Rightarrow$(i) is obviously true also without these conditions on $\mathcal{A}$, it follows that
(i)$\Leftrightarrow$(ii) is true without the commutativity and the separability of $\mathcal{A}$.

We conclude that Theorem \ref{12} holds without the commutativity of $\mathcal{A}$, whereas the separability of $\mathcal{A}$ is only needed for the implications
(iii)$\Rightarrow$(i) and (iii)$\Rightarrow$(ii).
\end{remarks}
\section{Main result of \cite{Foias}}\label{3}
The main result of \cite{Foias} is
\begin{theorem}\label{11}
Let $\mathcal{A}\subset\mathcal{B(H)}$ be a norm-separable norm closed algebra containing $I$, and $Q\in\mathcal{B(H)}$, $0\leq Q\leq I$. Then the following statements are
equivalent.
\begin{enumerate}[(i)]
\item There exists a sequence $(P_{n})_{n=1}^{\infty}$ of projections in $\mathcal{B(H)}$ such that $\displaystyle\lim_{n\to\infty}\|(I-P_{n})TP_{n}\|=0$ for all
$T\in\mathcal{A}$ and $\displaystyle w\hyphen\lim_{n\to\infty}P_{n}=Q$.
\item There exists a sequence $(R_{n})_{n=1}^{\infty}$ of projections in $\mathcal{B(H)}$ such that $\displaystyle w\hyphen\lim_{n\to\infty}(I-R_{n})TR_{n}=0$ for all $T
\in\mathcal{A}$ and $\displaystyle w\hyphen\lim_{n\to\infty}R_{n}=Q$.
\item There exists a representation $\rho$ of $p(C^{*}(\mathcal{A}))$ on some separable Hilbert space $\mathcal{H}'$ and a subspace $L\subset\mathcal{H}\oplus\mathcal{H}'$
invariant under $(\mathrm{id}\oplus(\rho\oplus p))(\mathcal{A})$ such that
\[P_{\mathcal{H}\oplus 0}P_{L}|_{\mathcal{H}\oplus 0}=Q.\]
\end{enumerate}
\end{theorem}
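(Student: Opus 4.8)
The plan is to establish the cycle (i)$\Rightarrow$(ii)$\Rightarrow$(iii)$\Rightarrow$(i), supplemented by a direct proof of (i)$\Rightarrow$(iii); the two passages involving (iii) are where the machinery of Section~\ref{2} does the work, while (i)$\Rightarrow$(ii) is trivial — take $R_n=P_n$, note that norm convergence to $0$ forces weak convergence to $0$, and the requirement $w\hyphen\lim P_n=Q$ is kept verbatim.

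For (i)$\Rightarrow$(iii): given $(P_n)$ as in (i), form $P:=(P_1,P_2,\ldots)_{\mathscr{U}}\in\mathcal{B}(\mathcal{H}^{\mathscr{U}})$, a projection. Exactly as in Section~\ref{4}, $(I-P)T^{\mathscr{U}}P=((I-P_n)TP_n)_{\mathscr{U}}=0$ for every $T\in\mathcal{A}$, so $\operatorname{ran}P$ is invariant under $\{T^{\mathscr{U}}:T\in\mathcal{A}\}$; and since $w\hyphen\lim_n P_n=Q$ forces $\lim_{n,\mathscr{U}}\langle P_nx,y\rangle=\langle Qx,y\rangle$ for $x,y\in\mathcal{H}$, formula~(\ref{21e}) and the identification $\mathcal{H}=\{(x)_{\mathscr{U}}:x\in\mathcal{H}\}$ give $P_{\mathcal{H}}P|_{\mathcal{H}}=Q$. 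Now I pass to separable data: choose countable dense subsets $D\subset\mathcal{H}$ and $\mathcal{A}_0\subset\mathcal{A}$ and set $L:=\overline{\operatorname{span}}\{T^{\mathscr{U}}P(x)_{\mathscr{U}}:T\in\mathcal{A}_0,\ x\in D\}$. Then $L$ is separable, $L\subset\operatorname{ran}P$, $L$ is invariant under $\{T^{\mathscr{U}}:T\in\mathcal{A}\}$, and $P(x)_{\mathscr{U}}\in L$ for all $x\in\mathcal{H}$ (by density, using $I\in\overline{\mathcal{A}_0}$). Since $L\perp\ker P$, this last point yields $P_L(x)_{\mathscr{U}}=P(x)_{\mathscr{U}}$, hence $P_{\mathcal{H}}P_L|_{\mathcal{H}}=Q$. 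Finally, a routine iterative construction (as in the proof of Proposition~\ref{41}) produces a separable subspace $\mathcal{M}\subset\widehat{\mathcal{H}}$ containing $P_{\widehat{\mathcal{H}}}L$ and reducing $(f\circ p)(\mathcal{A})$, hence reducing $(f\circ p)(C^*(\mathcal{A}))$; then $L\subset\mathcal{H}\oplus\mathcal{M}$, the assignment $p(S)\mapsto\widehat{S}|_{\mathcal{M}}$ is a representation $\rho$ of $p(C^*(\mathcal{A}))$ on $\mathcal{H}':=\mathcal{M}$, and $T^{\mathscr{U}}|_{\mathcal{H}\oplus\mathcal{M}}=T\oplus\rho(p(T))$, so $L$ witnesses (iii).

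For (iii)$\Rightarrow$(i): apply Theorem~\ref{22} to the separable unital $C^*$-algebra $C^*(\mathcal{A})$ with the inclusion representation and with $\pi:=\rho$, to get $\mathrm{id}\sim_a\mathrm{id}\oplus\rho\circ p$; thus there are unitaries $U_n\colon\mathcal{H}\to\mathcal{H}\oplus\mathcal{H}'$ with $\|U_nTU_n^*-(T\oplus\rho(p(T)))\|\to 0$ for $T\in C^*(\mathcal{A})$. Put $E:=P_L$ and $P_n:=U_n^*EU_n$. Then $(I-P_n)TP_n=U_n^*(I-E)(U_nTU_n^*)EU_n$, which differs in norm from $U_n^*(I-E)(T\oplus\rho(p(T)))EU_n=0$ (invariance of $L$) by at most $\|U_nTU_n^*-(T\oplus\rho(p(T)))\|\to 0$; hence $\|(I-P_n)TP_n\|\to 0$ for $T\in\mathcal{A}$, and one passes to an honest subsequence by a diagonal argument over countable dense sets. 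For $w\hyphen\lim P_n=Q$ I would use the refinement of Theorem~\ref{22} (equivalently of Corollary~\ref{23}) in which the $U_n$ may moreover be taken to converge, in the strong-$*$ topology, to the canonical isometric inclusion $\mathcal{H}\hookrightarrow\mathcal{H}\oplus\mathcal{H}'$; then for $x,y\in\mathcal{H}$, $\langle P_nx,y\rangle=\langle EU_nx,U_ny\rangle\to\langle P_L(x\oplus 0),y\oplus 0\rangle=\langle P_{\mathcal{H}\oplus 0}P_L(x\oplus 0),y\oplus 0\rangle=\langle Qx,y\rangle$. Note that this argument uses only that $L$ is invariant (not reducing) and that $\rho$ is an arbitrary representation, so it should go through with no separability hypothesis beyond what is needed to state (iii).

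The step I expect to be the main obstacle is (ii)$\Rightarrow$(iii). Passing to $R:=(R_1,R_2,\ldots)_{\mathscr{U}}$ yields only that the $\mathcal{H}$-corner of $(I-R)T^{\mathscr{U}}R$ vanishes, i.e.\ $(I-Q)TQ\in P_{\mathcal{H}}R(I-P_{\mathcal{H}})\mathcal{B}(\mathcal{H}^{\mathscr{U}})(I-P_{\mathcal{H}})RP_{\mathcal{H}}$ with $Q=P_{\mathcal{H}}R|_{\mathcal{H}}$, rather than the honest invariance $(I-R)T^{\mathscr{U}}R=0$; and, unlike for an ultrapower, $\widehat{\mathcal{H}}$ need not reduce the ultraproduct $R$, so the clean $T\oplus\widehat{T}$ picture is unavailable. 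Converting the weak convergence of $(I-R_n)TR_n$ into a genuine (approximately) invariant configuration is the crux: one would like to upgrade it to a sequence of projections as in (i) (so as to invoke (i)$\Rightarrow$(iii)), but the weak-to-norm passage resists the direct approach, since the relevant compressions carry uncontrolled off-diagonal terms. I would attempt a sparsification argument — pass to a subsequence of $(R_n)$ adapted to the quotient by the compacts and to an exhausting sequence of finite-dimensional subspaces — and, failing a clean argument of this kind, import this implication from \cite{Foias}. I finally observe that separability of $\mathcal{A}$ is used only in the two implications into (iii) (to keep $L$, $\mathcal{M}$ and $\mathcal{H}'$ separable), paralleling the way commutativity dropped out in Section~\ref{4}.
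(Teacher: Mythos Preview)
Your plan and the paper diverge on precisely the implication the paper singles out. The paper explicitly records that (i)$\Rightarrow$(ii) is trivial, imports (iii)$\Rightarrow$(i) from \cite{Foias} without reproving it, and offers as its sole contribution here an ultraproduct proof of (ii)$\Rightarrow$(iii). Your (i)$\Rightarrow$(iii) is correct and in the same spirit as the paper's argument; your sketch of (iii)$\Rightarrow$(i) goes beyond what the paper attempts and rests on an unstated strengthening of Theorem~\ref{22} (strong-$*$ convergence of the $U_n$ to the inclusion) that you would have to justify separately. But the heart of the matter is (ii)$\Rightarrow$(iii), which you correctly flag as the obstacle and then leave open.

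Here is the idea you are missing. You form $R=(R_1,R_2,\ldots)_{\mathscr{U}}$ and observe, correctly, that the weak hypothesis yields only $P_{\mathcal{H}}(I-R)T^{\mathscr{U}}R|_{\mathcal{H}}=0$, not $(I-R)T^{\mathscr{U}}R=0$. The paper does not try to make $\operatorname{ran}R$ invariant. Instead it takes
\[L=\bigvee\{(TR_ny)_{\mathscr{U}}:T\in\mathcal{A},\ y\in\mathcal{H}\}=\bigvee\{T^{\mathscr{U}}R(y)_{\mathscr{U}}:T\in\mathcal{A},\ y\in\mathcal{H}\},\]
the $\{T^{\mathscr{U}}:T\in\mathcal{A}\}$-invariant subspace generated by $R\mathcal{H}$. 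The weak hypothesis is then read \emph{as an orthogonality relation in $\mathcal{H}^{\mathscr{U}}$}:
\[\langle((I-R_n)x)_{\mathscr{U}},(TR_ny)_{\mathscr{U}}\rangle=\lim_{n,\mathscr{U}}\langle x,(I-R_n)TR_ny\rangle=0,\]
so $(x)_{\mathscr{U}}-(R_nx)_{\mathscr{U}}\perp L$ while $(R_nx)_{\mathscr{U}}\in L$; hence $P_L(x)_{\mathscr{U}}=(R_nx)_{\mathscr{U}}$ and $P_{\mathcal{H}}P_L|_{\mathcal{H}}=Q$. The passage to a separable reducing $\mathcal{H}'\subset\widehat{\mathcal{H}}$ is then exactly the step you already carry out in your (i)$\Rightarrow$(iii). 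No sparsification or weak-to-norm upgrade is needed: the weak convergence is used as is, inside the ultrapower, as a perpendicularity statement between $(I-R)\mathcal{H}$ and $T^{\mathscr{U}}R\mathcal{H}$. Your (i)$\Rightarrow$(iii) is the special case where the stronger hypothesis lets you stay inside $\operatorname{ran}P$; in general one must enlarge beyond $\operatorname{ran}R$, and the computation above is what guarantees this enlargement does not disturb the $\mathcal{H}$-compression.
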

Note that (i)$\Rightarrow$(ii) is trivial and both (ii)$\Rightarrow$(iii) and (iii)$\Rightarrow$(i) are nontrvial. In this section, we give an alternative proof of
(ii)$\Rightarrow$(iii). We start the proof with the following proposition.
\begin{proposition}\label{31}
Let $\mathcal{A}$ be a norm-separable algebra containing $I$ and let $Q\in\mathcal{B(H)}$. If there exists a bounded sequence $(R_{n})_{n=1}^{\infty}$ in $\mathcal{B(H)}$
such that $\displaystyle w\hyphen\lim_{n\to\infty}(I-R_{n}^{*})TR_{n}=0$ for all $T\in\mathcal{A}$ and $\displaystyle w\hyphen\lim_{n\to\infty}R_{n}=Q$, then there is a
separable subspace $L$ of $\mathcal{H}^{\mathscr{U}}$ invariant under $\{T^{\mathscr{U}}:T\in\mathcal{A}\}$ such that
\[P_{\mathcal{H}\oplus 0}P_{L}|_{\mathcal{H}\oplus 0}=Q.\]
\end{proposition}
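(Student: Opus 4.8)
The plan is to take for $L$ the invariant subspace generated by the range of the ultraproduct $R:=(R_{1},R_{2},\ldots)_{\mathscr{U}}\in\mathcal{B}(\mathcal{H}^{\mathscr{U}})$ of the given bounded sequence $(R_{n})_{n=1}^{\infty}$. Write $R(x)_{\mathscr{U}}$ for $R$ applied to the class $(x)_{\mathscr{U}}\in\mathcal{H}^{\mathscr{U}}$ of the constant sequence, so $R(x)_{\mathscr{U}}=(R_{n}x)_{\mathscr{U}}$, and set
\[
L:=\overline{\operatorname{span}}\{\,T^{\mathscr{U}}R(x)_{\mathscr{U}}:T\in\mathcal{A},\ x\in\mathcal{H}\,\}.
\]
Since $\mathcal{A}$ is an algebra, $S^{\mathscr{U}}T^{\mathscr{U}}=(ST)^{\mathscr{U}}$ with $ST\in\mathcal{A}$, and each $S^{\mathscr{U}}$ is bounded, $L$ is invariant under $\{T^{\mathscr{U}}:T\in\mathcal{A}\}$; and since $I\in\mathcal{A}$ we have $R(x)_{\mathscr{U}}\in L$ for every $x\in\mathcal{H}$. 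Finally, because $(T,x)\mapsto T^{\mathscr{U}}R(x)_{\mathscr{U}}$ is norm-continuous in each variable, $L$ coincides with the closed span of $\{\,T^{\mathscr{U}}R(x)_{\mathscr{U}}:T\in\mathcal{A}_{0},\ x\in\mathcal{H}_{0}\,\}$ for any countable norm-dense sets $\mathcal{A}_{0}\subseteq\mathcal{A}$, $\mathcal{H}_{0}\subseteq\mathcal{H}$, so $L$ is separable.

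The next step is to convert the two hypotheses into statements about inner products in $\mathcal{H}^{\mathscr{U}}$, by taking ultralimits. For $T\in\mathcal{A}$ and $x,y\in\mathcal{H}$,
\[
\langle(I-R_{n}^{*})TR_{n}x,\,y\rangle=\langle TR_{n}x,\,y\rangle-\langle TR_{n}x,\,R_{n}y\rangle,
\]
so, using $(TR_{n}x)_{\mathscr{U}}=T^{\mathscr{U}}R(x)_{\mathscr{U}}$, $(R_{n}y)_{\mathscr{U}}=R(y)_{\mathscr{U}}$ and $\langle(a_{n})_{\mathscr{U}},(b_{n})_{\mathscr{U}}\rangle=\lim_{n,\mathscr{U}}\langle a_{n},b_{n}\rangle$, the hypothesis $w\hyphen\lim_{n\to\infty}(I-R_{n}^{*})TR_{n}=0$ becomes
\[
\langle T^{\mathscr{U}}R(x)_{\mathscr{U}},\,(y)_{\mathscr{U}}-R(y)_{\mathscr{U}}\rangle=0,\qquad T\in\mathcal{A},\ x,y\in\mathcal{H}.
\]
Since the generators of $L$ are precisely the vectors $T^{\mathscr{U}}R(x)_{\mathscr{U}}$, this says that $(y)_{\mathscr{U}}-R(y)_{\mathscr{U}}\perp L$ for every $y\in\mathcal{H}$. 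In the same way, $w\hyphen\lim_{n\to\infty}R_{n}=Q$ gives $\langle R(x)_{\mathscr{U}},(y)_{\mathscr{U}}\rangle=\lim_{n,\mathscr{U}}\langle R_{n}x,y\rangle=\langle Qx,y\rangle$ for $x,y\in\mathcal{H}$, i.e.\ $P_{\mathcal{H}\oplus 0}\,R|_{\mathcal{H}\oplus 0}=Q$ under the identification $\mathcal{H}\cong\mathcal{H}\oplus 0$.

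To conclude, fix $x\in\mathcal{H}$. Since $(y)_{\mathscr{U}}-R(y)_{\mathscr{U}}\perp L$ for every $y\in\mathcal{H}$, in particular $(x)_{\mathscr{U}}-R(x)_{\mathscr{U}}\in L^{\perp}$, hence $P_{L}\bigl((x)_{\mathscr{U}}-R(x)_{\mathscr{U}}\bigr)=0$; and since $R(x)_{\mathscr{U}}\in L$, this forces $P_{L}(x)_{\mathscr{U}}=P_{L}R(x)_{\mathscr{U}}=R(x)_{\mathscr{U}}$. Applying $P_{\mathcal{H}\oplus 0}$ and using $P_{\mathcal{H}\oplus 0}\,R|_{\mathcal{H}\oplus 0}=Q$,
\[
P_{\mathcal{H}\oplus 0}\,P_{L}|_{\mathcal{H}\oplus 0}=P_{\mathcal{H}\oplus 0}\,R|_{\mathcal{H}\oplus 0}=Q,
\]
which is the asserted identity.

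I do not expect a genuine analytic obstacle: once the dictionary between weak limits of $(R_{n})$ and inner products in $\mathcal{H}^{\mathscr{U}}$ is in place, all of the above is formal. The point that needs the most care is the bookkeeping around $L$, which must be simultaneously separable, invariant under every $T^{\mathscr{U}}$ ($T\in\mathcal{A}$), and large enough to contain $R(x)_{\mathscr{U}}$ for \emph{every} $x\in\mathcal{H}$ (the last being what the final step uses); all three are supplied by the norm-continuity of $(T,x)\mapsto T^{\mathscr{U}}R(x)_{\mathscr{U}}$ together with the passage to the countable dense sets $\mathcal{A}_{0},\mathcal{H}_{0}$. It is also worth observing that the asymmetric shape $(I-R_{n}^{*})TR_{n}$ of the hypothesis is exactly what makes the chosen generators $T^{\mathscr{U}}R(x)_{\mathscr{U}}$ of $L$---rather than, say, $T^{\mathscr{U}}R^{*}(x)_{\mathscr{U}}$---orthogonal to the vectors $(y)_{\mathscr{U}}-R(y)_{\mathscr{U}}$, so the form of the hypothesis dictates how $L$ ought to be generated.
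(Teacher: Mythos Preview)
Your proof is correct and follows essentially the same route as the paper's: the subspace $L=\overline{\operatorname{span}}\{T^{\mathscr{U}}R(x)_{\mathscr{U}}:T\in\mathcal{A},\ x\in\mathcal{H}\}$ is exactly the paper's $\vee\{(TR_{n}y)_{\mathscr{U}}:T\in\mathcal{A},\,y\in\mathcal{H}\}$, and the orthogonality calculation and identification of $P_{L}(x)_{\mathscr{U}}$ with $(R_{n}x)_{\mathscr{U}}$ match. Your write-up is a bit more explicit about why $L$ is separable and why $R(x)_{\mathscr{U}}\in L$ (via $I\in\mathcal{A}$), but there is no substantive difference.
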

\begin{proof}
Take
\[L=\vee\{(TR_{n}y)_{\mathscr{U}}:T\in\mathcal{A},\,y\in\mathcal{H}\}.\]
Then $L$ is a separable subspace of $\mathcal{H}^{\mathscr{U}}$ that is invariant under $T^{\mathscr{U}}$ for every $T\in\mathcal{A}$. It remains to show that
\[P_{\mathcal{H}\oplus 0}P_{L}|_{\mathcal{H}\oplus 0}=Q.\]
For every $x,y\in\mathcal{H}$,
\begin{eqnarray*}
\langle(x)_{\mathscr{U}}-(R_{n}x)_{\mathscr{U}},(TR_{n}y)_{\mathscr{U}}\rangle&=&\langle((I-R_{n})x)_{\mathscr{U}},(TR_{n}y)_{\mathscr{U}}\rangle
\\&=&\lim_{n,\mathscr{U}}\langle (I-R_{n})x,TR_{n}y\rangle\\&=&\lim_{n,\mathscr{U}}\langle x,(I-R_{n}^{*})TR_{n}y\rangle=0\text{ by assumption}.
\end{eqnarray*}
Thus, $((x)_{\mathscr{U}}-(R_{n}x)_{\mathscr{U}})\perp L$ for every $x\in\mathcal{H}$. But $(R_{n}x)_{\mathscr{U}}\in L$. Therefore, by the definition of orthogonal
projection onto $L$,
\[P_{L}(x)_{\mathscr{U}}=(R_{n}x)_{\mathscr{U}}.\]
Taking $P_{\mathcal{H}\oplus 0}$ on both sides, we obtain
\[P_{\mathcal{H}\oplus 0}P_{L}(x)_{\mathscr{U}}=P_{\mathcal{H}\oplus 0}(R_{n}x)_{\mathscr{U}}=w\hyphen\lim_{n,\mathscr{U}}R_{n}x=Qx.\]
\end{proof}
We are now ready to complete the proof of (ii)$\Rightarrow$(iii) in Theorem \ref{11}.

Assume (ii). Applying Proposition \ref{31}, we obtain a separable subspace $L$ of $\mathcal{H}^{\mathscr{U}}$ invariant under $\{T^{\mathscr{U}}:T\in\mathcal{A}\}$
such that
\[P_{\mathcal{H}\oplus 0}P_{L}|_{\mathcal{H}\oplus 0}=Q.\]
By (\ref{22e}),
\[T^{\mathscr{U}}=T\oplus\widehat{T}=T\oplus f(p(T))=(\mathrm{id}\oplus(f\circ p))(T).\]
Take $\mathcal{H}'$ to be the smallest subspace of $\widehat{\mathcal{H}}$ that contains $P_{\mathcal{\widehat{H}}}L$ and reduces $(f\circ p)(C^{*}(\mathcal{A}))$. Note
that $\mathcal{H}'$ is separable. Take $\rho$ to be $S\mapsto f(S)|_{\mathcal{H}'}$ for $S\in p(C^{*}(\mathcal{A}))$. We obtain (iii).
\begin{remark}
Since the assumption of Proposition \ref{31} is slightly weaker than (ii) in Theorem \ref{11}, we have the following slight improvement of Theorem \ref{11}.
\begin{theorem}
Let $\mathcal{A}$ be a norm-separable norm closed algebra containing $I$, and $Q\in\mathcal{B(H)}$, $0\leq Q\leq I$. Then the following statements are equivalent.
\begin{enumerate}[(i)]
\item There exists a sequence $(P_{n})_{n=1}^{\infty}$ of projections in $\mathcal{B(H)}$ such that $\displaystyle\lim_{n\to\infty}\|(I-P_{n})TP_{n}\|=0$ for all
$T\in\mathcal{A}$ and $\displaystyle w\hyphen\lim_{n\to\infty}P_{n}=Q$.
\item There exists a bounded sequence $(R_{n})_{n=1}^{\infty}$ in $\mathcal{B(H)}$ such that $\displaystyle w\hyphen\lim_{n\to\infty}(I-R_{n}^{*})TR_{n}=0$ for all $T
\in\mathcal{A}$ and $\displaystyle w\hyphen\lim_{n\to\infty}R_{n}=Q$.
\item There exists a representation $\rho$ of $p(C^{*}(\mathcal{A}))$ on some separable Hilbert space $\mathcal{H}'$ and a subspace $L\subset\mathcal{H}\oplus\mathcal{H}'$
invariant under $(\mathrm{id}\oplus(\rho\oplus p))(\mathcal{A})$ such that
\[P_{\mathcal{H}\oplus 0}P_{L}|_{\mathcal{H}\oplus 0}=Q.\]
\end{enumerate}
\end{theorem}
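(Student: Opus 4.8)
The plan is to run the cycle (i) $\Rightarrow$ (ii) $\Rightarrow$ (iii) $\Rightarrow$ (i). Statements (i) and (iii) are literally the same as in Theorem \ref{11}, so only the passage through the weakened statement (ii) requires any new work, and in fact almost all of that work has already been done.

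The implication (i) $\Rightarrow$ (ii) is immediate: if $(P_{n})_{n=1}^{\infty}$ is a sequence of projections with $\|(I-P_{n})TP_{n}\|\to 0$ for all $T\in\mathcal{A}$ and $w\hyphen\lim_{n\to\infty}P_{n}=Q$, put $R_{n}:=P_{n}$; this is a bounded sequence, and since each $R_{n}$ is self-adjoint, $(I-R_{n}^{*})TR_{n}=(I-P_{n})TP_{n}\to 0$ in norm, hence weakly, while $w\hyphen\lim_{n\to\infty}R_{n}=Q$. For (ii) $\Rightarrow$ (iii) there is nothing new to prove: given $(R_{n})_{n=1}^{\infty}$ as in (ii), Proposition \ref{31} furnishes a separable subspace $L\subset\mathcal{H}^{\mathscr{U}}$ invariant under $\{T^{\mathscr{U}}:T\in\mathcal{A}\}$ with $P_{\mathcal{H}\oplus 0}P_{L}|_{\mathcal{H}\oplus 0}=Q$, and then the argument following Proposition \ref{31} applies verbatim — one uses $T^{\mathscr{U}}=T\oplus\widehat{T}=(\mathrm{id}\oplus(f\circ p))(T)$ from (\ref{22e}) together with Theorem \ref{21}, replaces $\widehat{\mathcal{H}}$ by the smallest (separable) subspace $\mathcal{H}'$ of $\widehat{\mathcal{H}}$ that contains $P_{\widehat{\mathcal{H}}}L$ and reduces $(f\circ p)(C^{*}(\mathcal{A}))$, and sets $\rho(S):=f(S)|_{\mathcal{H}'}$ — and this yields exactly (iii). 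The key point is that Proposition \ref{31} is invoked precisely in the form in which it is stated, i.e.\ under the hypothesis appearing in (ii).

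Finally, (iii) $\Rightarrow$ (i) is the implication (iii) $\Rightarrow$ (i) of Theorem \ref{11}, proved in \cite{Foias}; since statements (i) and (iii) of the two theorems are word for word the same, nothing further is needed here. Combining the three implications proves the theorem. The only point deserving comment — and the reason the weakening in (ii) is harmless — is that the key computation in the proof of Proposition \ref{31}, namely $\langle(I-R_{n})x,TR_{n}y\rangle=\langle x,(I-R_{n}^{*})TR_{n}y\rangle$, is arranged so that the adjoint $R_{n}^{*}$ enters exactly where the hypothesis $w\hyphen\lim_{n\to\infty}(I-R_{n}^{*})TR_{n}=0$ is applied; no step in the cycle requires $R_{n}$ to be a projection, or even self-adjoint. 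I therefore anticipate no real obstacle: the proof is essentially a matter of checking that each construction in the cycle respects the weaker hypothesis, and of observing that (ii) here is in turn implied by statement (ii) of Theorem \ref{11} (projections being bounded self-adjoint contractions), so that the present theorem is a (formally stronger) companion to it.
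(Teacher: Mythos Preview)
Your proposal is correct and matches the paper's approach exactly: the paper derives this theorem as an immediate corollary of the fact that Proposition~\ref{31} is stated under precisely the weaker hypothesis appearing in (ii), so (ii) $\Rightarrow$ (iii) goes through verbatim, while (i) $\Rightarrow$ (ii) is trivial and (iii) $\Rightarrow$ (i) is borrowed unchanged from Theorem~\ref{11} via \cite{Foias}. Your additional remark pinpointing the inner-product identity $\langle (I-R_n)x,TR_ny\rangle=\langle x,(I-R_n^*)TR_ny\rangle$ as the reason the weakening costs nothing is accurate and helpful.
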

\end{remark}

\end{document}